\newcommand{\R}{{\mathbb{R}}}
\newcommand{\Lip}{L}
\DeclareMathOperator{\diag}{diag}
\newtheorem{theorem}{Theorem}[section]
\newtheorem{lemma}[theorem]{Lemma}
\theoremstyle{definition}
\theoremstyle{remark}
\newtheorem{remark}[theorem]{Remark}
\theoremstyle{definition}
\numberwithin{equation}{section}
\begin{document}

\title{A note on the optimal convergence rate of descent methods with fixed step sizes for smooth strongly convex functions}
\author{Andr\'e Uschmajew\thanks{Max Planck Institute for Mathematics in the Sciences, 04103
Leipzig, Germany.} \and Bart Vandereycken\thanks{Section of Mathematics, University of Geneva, 1211 Geneva, Switzerland. This work was supported by the SNSF under research project 192129.}}
\date{}

\maketitle

\begin{abstract}
Based on a result by Taylor, Hendrickx, and Glineur ({\em J.~Optim.~Theory~Appl.}, 178(2):455--476, 2018) on the attainable convergence rate of gradient descent for smooth and strongly convex functions in terms of function values, an elementary convergence analysis for general descent methods with fixed step sizes is presented. It covers general variable metric methods, gradient related search directions under angle and scaling conditions, as well as inexact gradient methods. In all cases, optimal rates are obtained.
\end{abstract}

\section{Introduction}

An $L$-smooth and $\mu$-strongly convex function $f \colon \R^n \to \R$ is  characterized by the two properties
\begin{equation*}\label{eq: Lipschitz condition}
\| \nabla f(x) - \nabla f(y) \| \le \Lip \| x - y \|
\end{equation*}
and
\begin{equation*}\label{eq: strong convexity}
 f(x) \ge f(y) + \langle \nabla f(y), x - y \rangle + \frac{\mu}{2} \| x - y \|^2 
\end{equation*}
for some constants $0 < \mu \le L$ and all $x,y \in \R^n$. Here, $\langle \ , \  \rangle$ can be any inner product on $\R^n$ with corresponding norm $\| \cdot \|$, and $\nabla f$ denotes the gradient with respect to this inner product. Note that the  constants $\mu$ and $L$ depend on the chosen inner product. The class of such functions plays a main role in the convergence theory of the gradient method and related descent methods for finding the unique global minimum $x^*$ of a given $f$. The update rule of the gradient method is
\[
 x^+ = x - h \nabla f(x),
\]
where $h > 0$ is a step size which may depend on the current point $x$. It is well known that the fixed step size
\[
h = \frac{2}{L + \mu}
\]
achieves the optimal error reduction
\begin{equation}\label{eq: rate for norm}
\| x^+ - x^* \|^2 \le  \left(\frac{\kappa_f - 1}{\kappa_f + 1} \right)^2 \| x - x^* \|^2, \quad \kappa_f = \frac{L}{\mu},
\end{equation}
per step, which inductively implies the convergence of the method to $x^*$. We refer to~\cite[Theorem~2.1.15]{Nesterov2004} for details.

In a more general setting of proximal gradient methods, it has recently been shown by Taylor, Hendrickx, and Glineur~\cite[Theorem~3.3 with $h=0$]{Taylor2018} that the same rate is also valid for the error in function value. Specifically, for any 
\begin{equation}\label{eq: feasible step sizes}
 0 \le h \le \frac{2}{L + \mu}
\end{equation}
it holds that
\begin{equation}\label{eq: function values general h}
 f(x^+) - f(x^*) \le (1 - h\mu)^2 (f(x) - f(x^*)).
\end{equation}
Moreover, for $\frac{2}{L+\mu} \le h < \frac{2}{L}$ one has $f(x^+) - f(x^*) \le (hL - 1)^2 (f(x) - f(x^*))$. This automatically follows from~\eqref{eq: feasible step sizes} and~\eqref{eq: function values general h} by using  a weaker strong convexity bound $0 < \mu' \le \mu$ satisfying $h = \frac{2}{L + \mu'}$ and noting that $1 - h \mu' = hL - 1$. The optimal choice in the estimates is $h = 2/(L + \mu)$ and leads to
\begin{equation}\label{eq: estimate for function values}
 f(x^+) - f(x^*) \le  \left(\frac{\kappa_f - 1}{\kappa_f + 1} \right)^2 (f(x) - f(x^*)).
\end{equation}
This estimate for one step of the method is highly nontrivial. Obviously, it implies the same inequality for the gradient descent method with exact line search (when the left side is minimized over all $h$), which has been obtained earlier in~\cite{deKlerk2017}. Moreover, this estimate is known to be optimal in the class of $L$-smooth and $\mu$-strongly convex functions. In fact, it is already optimal for quadratic functions in that class; see, e.g.,~\cite[Example~1.3]{deKlerk2017}.

Of course, in many applications the difference $f(x) - f(x^*)$ is a natural error measure by itself. For example, for strongly convex quadratic functions it is proportional to the squared energy norm of the quadratic form. In general, for an $L$-smooth and $\mu$-convex function we always have
\[
 \frac{\mu}{2} \| x - x^* \|^2 \le f(x) - f(x^*) \le \frac{L}{2} \| x - x^* \|^2,
\]
which clearly shows that $f(x_{\ell}) - f(x^*) \to 0$ for an iterative method implies $\| x_\ell - x^* \| \to 0$ for $\ell \to \infty$. Moreover, both error measures will exhibit the same $R$-linear convergence rate. The novelty of the estimate~\eqref{eq: estimate for function values} is that one also has an optimal $Q$-linear rate for the function values, both for fixed step sizes and exact line search. (We refer to~\cite{ortegaIterativeSolutionNonlinear1970} for the definitions of $R$- and $Q$-linear rate.) However, compared to~\eqref{eq: rate for norm} an estimate like~\eqref{eq: estimate for function values} is ``more intrinsic'', because the chosen inner product in $\R^n$ enters only via the constants $\mu$ and $L$. In this short note, we illustrate this advantage by showing that~\eqref{eq: estimate for function values} allows for a rather clean analysis of general variable metric methods, as well as gradient related methods subject to angle and scaling conditions. In addition,~in Theorem~\ref{thm: inexact gradient} below we show how~\eqref{eq: estimate for function values} already implies the sharp rates for inexact gradient methods under relative error bounds with fixed step sizes, based on a suitable change of the metric, thereby improving and simplifying a similar result in~\cite{deKlerk2020}.

\section{Variable metric method}

We first consider the variable metric method. Here the update rule reads
\begin{equation}\label{eq: variable metric method}
 x^+ = x - h A^{-1} \nabla f(x),
\end{equation}
where $A$ is a symmetric (with respect to the given inner product) and positive definite matrix. It is well known that such an update step can also be interpreted as a gradient step with respect to a modified inner product. This leads to the following result that will be the basis for our further considerations.

\begin{theorem}\label{thm: variable metric method}
Assume the eigenvalues of $A$ are in the positive interval $[\lambda,\Lambda]$ and define
\[
 \bar h = \frac{2}{L/\lambda + \mu/\Lambda}.
\]
Then $x^+$ in~\eqref{eq: variable metric method} with $0 \le h \le \bar h$ satisfies
\[
 f(x^+) - f(x^*) \le \left(1 - \frac{h \mu}{\Lambda} \right)^2 (f(x) - f(x^*)).
\]
In particular, the step size $h = \bar h$ yields
\begin{equation}\label{eq: estimate for function values variable metric}
 f(x^+) - f(x^*) \le \left(\frac{\kappa_{f,A} - 1}{\kappa_{f,A} + 1} \right)^2 (f(x) - f(x^*)), \quad \kappa_{f,A} = \frac{L }{\mu} \, \frac{\Lambda }{\lambda}. 
\end{equation}
\end{theorem}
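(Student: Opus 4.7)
The plan is to reinterpret the update~\eqref{eq: variable metric method} as a standard gradient step with respect to a modified inner product, and then invoke~\eqref{eq: function values general h} in that metric. Concretely, I would define the $A$-inner product $\langle u,v\rangle_A := \langle u, Av\rangle$ with induced norm $\|\cdot\|_A$, and verify that the gradient of $f$ with respect to $\langle\cdot,\cdot\rangle_A$, call it $\nabla_A f(x)$, satisfies $\nabla_A f(x) = A^{-1} \nabla f(x)$. This follows from the defining identity $\langle \nabla_A f(x), v\rangle_A = \langle \nabla f(x), v\rangle$ for all $v$. Hence~\eqref{eq: variable metric method} reads $x^+ = x - h \nabla_A f(x)$, i.e., a plain gradient step in the $A$-metric.

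Next I would translate the smoothness and strong convexity constants. From the spectral bound on $A$ one has $\lambda \|v\|^2 \le \|v\|_A^2 \le \Lambda \|v\|^2$. Using the equivalent descent-lemma form of $L$-smoothness,
\[
 f(x) \le f(y) + \langle \nabla f(y), x-y\rangle + \tfrac{L}{2}\|x-y\|^2,
\]
together with $\|x-y\|^2 \le \|x-y\|_A^2/\lambda$ and $\langle \nabla f(y), x-y\rangle = \langle \nabla_A f(y), x-y\rangle_A$, yields that $f$ is $L/\lambda$-smooth with respect to $\langle\cdot,\cdot\rangle_A$. Similarly, applying $\|x-y\|^2 \ge \|x-y\|_A^2/\Lambda$ to the strong convexity inequality shows $f$ is $\mu/\Lambda$-strongly convex in the $A$-metric.

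Finally, since $f$ is $L/\lambda$-smooth and $\mu/\Lambda$-strongly convex in the modified metric, the feasible-step-size interval for gradient descent in this metric is exactly $0 \le h \le 2/(L/\lambda + \mu/\Lambda) = \bar h$. Applying~\eqref{eq: function values general h} in the $A$-metric to the step $x^+ = x - h\nabla_A f(x)$ directly gives
\[
 f(x^+) - f(x^*) \le (1 - h\,\mu/\Lambda)^2 (f(x) - f(x^*)),
\]
which is the first claim. The optimal choice $h = \bar h$ then produces the contraction factor $(L/\lambda - \mu/\Lambda)/(L/\lambda + \mu/\Lambda) = (\kappa_{f,A}-1)/(\kappa_{f,A}+1)$ with $\kappa_{f,A} = L\Lambda/(\mu\lambda)$, yielding~\eqref{eq: estimate for function values variable metric}.

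There is no serious obstacle in this argument once the change of inner product is made; the only point requiring attention is the direction of the spectral inequalities when transferring each constant (strong convexity pairs with the upper bound $\Lambda$, smoothness with the lower bound $\lambda$). Note also that $f$ does not need to depend on the inner product, so the minimizer $x^*$ is unchanged, and~\eqref{eq: function values general h} is metric-agnostic in the sense that it only uses the relevant constants.
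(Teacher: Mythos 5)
Your proposal is correct and follows essentially the same route as the paper: reinterpret~\eqref{eq: variable metric method} as a plain gradient step in the $A$-inner product, transfer the constants to $L/\lambda$ and $\mu/\Lambda$ via the spectral bounds on $A$, and apply~\eqref{eq: function values general h} in that metric. The only cosmetic difference is that you verify the new constants through the descent-lemma and strong-convexity inequalities directly, whereas the paper uses the equivalent monotone-gradient characterizations; both are justified by the same references.
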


\begin{proof}
The result is obtained from~\eqref{eq: function values general h} by noting that $\nabla_A f(x) =   A^{-1}\nabla f(x)$ is the gradient of $f$ with respect to the $A$-inner product $\langle x, y \rangle_A = \langle x, Ay \rangle$. We have
\[
\langle \nabla_A f(x) - \nabla_A f(y), x - y \rangle_A \le L \| x - y \|^2 \le \frac{L}{\lambda} \| x - y \|_A^2
\]
as well as
\[
\langle \nabla_A f(x) - \nabla_A f(y), x - y \rangle_A \ge \mu \| x - y \|^2 \ge \frac{\mu}{\Lambda} \| x - y \|_A^2
\]
for all $x,y$. These two conditions are equivalent to $f$ being $(L/\lambda)$-smooth and $(\mu/\Lambda)$-strongly convex in that $A$-inner product; see, e.g.,~\cite[Theorems~2.1.5~\&~2.1.9]{Nesterov2004}. Thus in~\eqref{eq: feasible step sizes} and~\eqref{eq: function values general h}, we can replace $\mu$ with $\mu/\Lambda$ and $L$ by $L/\lambda$, which is exactly the statement of the theorem.
\end{proof}
An alternative, and somewhat more direct proof of Theorem~\ref{thm: variable metric method} that does not require changing the inner product, can be given by applying the result~\eqref{eq: function values general h} directly to the function $g(y) = f(A^{-1/2}y)$ at $y = A^{1/2}x$.

Observe that $\kappa_{f,A} = \kappa_f \cdot \kappa_A$ with $\kappa_A = \Lambda / \lambda \ge 1$ the condition number of $A$. The contraction factor in~\eqref{eq: estimate for function values variable metric} will therefore always be worse than the original factor in~\eqref{eq: estimate for function values}, which corresponds to $A=I$. This might seem suboptimal since in Newton's method, and under additional regularity conditions, the contraction factor improves when choosing $A = \nabla^2 f(x)$. However, for the general class of methods~\eqref{eq: variable metric method}, the result in Theorem~\ref{thm: variable metric method} is optimal. This can already be seen for the function $f(x) = \frac{1}{2} \| x\|^2$, in which case~\eqref{eq: variable metric method} becomes the linear iteration $x^+ = (I - hA^{-1})x$. Its contraction factor as predicted by~\eqref{eq: estimate for function values variable metric} is bounded by $(\kappa_A - 1)^2/(\kappa_A + 1)^2$, which is indeed a tight bound: as in \cite[Example~1.3]{deKlerk2017}, take $A = \diag(\lambda, \ldots, \Lambda)$ and $x = (x_1, 0, \ldots, 0, x_n)$. Then an exact line search yields $x^+ = (\kappa_A - 1)/(\kappa_A + 1) \cdot (-x_1, 0, \ldots, 0, x_n)$, and clearly there cannot be a better contraction factor with fixed step size. Note that the step size $\bar h$ in Theorem~\ref{thm: variable metric method} also leads to equality in~\eqref{eq: estimate for function values variable metric} when $x$ is an eigenvector corresponding to $\lambda$ or $\Lambda$. For a less trivial example, consider $f(x) = \frac{1}{2} \langle x, A^{-1} x \rangle$. Then~\eqref{eq: variable metric method} becomes $x^+ = (I - hA^{-2})x$ and the same $x$ from above now leads to a contraction with the factor $(\kappa_{A^2} - 1)^2/(\kappa_{A^2}+ 1)^2$ where indeed $\kappa_{A^2} = \kappa_f \kappa_A$, as predicted by Theorem~\ref{thm: variable metric method}.

\section{Gradient related methods}

Next we provide error estimates for gradient related descent methods under angle and scaling conditions. Specifically, we consider the update rule
\begin{equation}\label{eq: perturbed GD}
 x^+ = x - h d,
\end{equation}
where $-d$ is a descent direction, that is, $d$ satisfies
\begin{equation}\label{eq:def of cos theta}
\langle \nabla f(x), d \rangle = \cos \theta \| \nabla f(x) \| \| d \|, \quad \cos \theta > 0,
\end{equation}
for some $\theta \in [0, \pi/2)$. This condition is very natural since it guarantees the convergence of~\eqref{eq: perturbed GD}; see, e.g., \cite[Chapter 3.2]{nocedalNumericalOptimization2006}. In particular, for  the case of exact line search, it has been shown in~\cite[Theorem~5.1]{deKlerk2017} that
\begin{equation}\label{eq:optimal rate angle condition}
f(x^+) - f(x^*) \le \left(\frac{\kappa_{f,\theta} - 1}{\kappa_{f,\theta} + 1} \right)^2 (f(x) - f(x^*)), \quad \kappa_{f,\theta} = \frac{L}{\mu} \left(\frac{1 + \sin \theta}{1 - \sin \theta}\right),
\end{equation}
and that this Q-linear rate is optimal. For the case of quadratic functions this has been known before; see, e.g.,~\cite{MuntheKaas1987}. We also mention the result of~\cite[Theorem~3.3]{Cohen1981}, which identifies the rate in~\eqref{eq:optimal rate angle condition} as optimal $R$-linear rate for exact line search when $f$ is twice continuously differentiable. 

Here, we aim to generalize this result to fixed step sizes. The extent to which this is possible depends on the available information about the quantities $\| \nabla f(x) \|$, $\| d \|$, and $\langle \nabla f(x), d \rangle$. The basic idea is to interpret~\eqref{eq: perturbed GD} as a variable metric method in order to apply Theorem~\ref{thm: variable metric method}. For this we need to find a symmetric and positive definite matrix $A$ satisfying
\[
A d = \nabla f(x)
\]
and estimate its condition number. Such a matrix can be found explicitly using the following lemma, which originates from the SR1 update rule; see, e.g.,~\cite{nocedalNumericalOptimization2006}.

\begin{lemma}\label{lem: SR1 update}
Let $u, v \in \R^n$ such that $\|u \| = \| v \| = 1$ and $\langle u, v \rangle = \cos \theta$. Then the matrix 
\[
 B = \frac{1}{\alpha} \left( I - \frac{r r^*}{\langle r, u \rangle} \right), \quad r = u - \alpha v, \quad \alpha = \frac{1 - \sin \theta}{\cos \theta} = \frac{\cos \theta}{1+\sin \theta}
\]
is symmetric (for the given inner product), satisfies $Bu=v$, and has
\[
\lambda_{\min}(B) = \frac{\cos \theta}{1 + \sin \theta}, \quad \lambda_{\max}(B) = \frac{\cos \theta}{1 - \sin \theta},
\]
as its smallest and largest eigenvalues, respectively. Here, $rr^*$ denotes the rank-one matrix satisfying $rr^* x = r \langle r,x\rangle$ for all $x \in \R^n$.
\end{lemma}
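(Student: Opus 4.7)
My plan is to verify the three claims in order: symmetry, the equation $Bu = v$, and the spectrum. Symmetry is immediate because $I$ is self-adjoint for $\langle\cdot,\cdot\rangle$ and the rank-one operator $rr^*$ defined by $rr^*x = r\langle r,x\rangle$ is symmetric by construction.

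For the identity $Bu=v$, I would simply substitute: the bracket in the definition of $B$ applied to $u$ gives $u - r\langle r,u\rangle/\langle r,u\rangle = u - r$, so $Bu = (u-r)/\alpha = v$ by the definition $r = u - \alpha v$. This is just a one-line calculation.

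For the spectrum, the key observation is that $B = (1/\alpha)I - (1/(\alpha\langle r,u\rangle))\,rr^*$ is a rank-one perturbation of $(1/\alpha)I$. Hence any vector orthogonal to $r$ is an eigenvector with eigenvalue $1/\alpha = \cos\theta/(1-\sin\theta)$, accounting for an $(n-1)$-dimensional eigenspace. The remaining eigenvector is $r$ itself, with eigenvalue $(1/\alpha)(1 - \|r\|^2/\langle r,u\rangle)$. To finish I need the two inner products $\langle r,u\rangle = 1 - \alpha\cos\theta$ and $\|r\|^2 = 1 - 2\alpha\cos\theta + \alpha^2$, both of which follow from $\|u\|=\|v\|=1$ and $\langle u,v\rangle = \cos\theta$.

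The only remaining task is to check that, after substituting $\alpha\cos\theta = 1 - \sin\theta$ and $\alpha^2 = (1-\sin\theta)/(1+\sin\theta)$, the scalar $(1/\alpha)(1 - \|r\|^2/\langle r,u\rangle)$ simplifies to $\cos\theta/(1+\sin\theta)$. This is the only non-trivial step, but it is a routine trigonometric simplification: one finds $\langle r,u\rangle = \sin\theta$ and $\|r\|^2 = 2\sin^2\theta/(1+\sin\theta)$, whence $\|r\|^2/\langle r,u\rangle = 2\sin\theta/(1+\sin\theta)$ and $1 - \|r\|^2/\langle r,u\rangle = (1-\sin\theta)/(1+\sin\theta)$, so multiplying by $1/\alpha = \cos\theta/(1-\sin\theta)$ yields exactly $\cos\theta/(1+\sin\theta)$. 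Since $\theta\in[0,\pi/2)$, this value is no larger than $\cos\theta/(1-\sin\theta)$, which identifies it as $\lambda_{\min}(B)$ and the $(n-1)$-fold eigenvalue as $\lambda_{\max}(B)$, completing the proof. No serious obstacle is expected; the only care needed is to track the sign conventions so that $\lambda_{\min}$ and $\lambda_{\max}$ are correctly identified.
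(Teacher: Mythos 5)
Your proposal is correct and follows essentially the same route as the paper: both verify symmetry and $Bu=v$ by direct substitution, then exploit that $B$ is a rank-one perturbation of $(1/\alpha)I$ to read off the eigenvalue $1/\alpha=\cos\theta/(1-\sin\theta)$ on the orthogonal complement of $r$ and compute the eigenvalue on $r$ itself. The only (immaterial) difference is that you substitute $\langle r,u\rangle=\sin\theta$ and $\|r\|^2=2\sin^2\theta/(1+\sin\theta)$ explicitly before simplifying, whereas the paper carries out the same simplification in terms of $\alpha$, arriving at the factor $\alpha^2=(1-\sin\theta)/(1+\sin\theta)$ and hence the eigenvalue $\alpha=\cos\theta/(1+\sin\theta)$.
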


\begin{proof}
This is checked by a straightforward calculation. Obviously, the matrix $I - \frac{\ r^{} r^*}{\langle r, u \rangle}$ equals the identity on the orthogonal complement of $r$. Its eigenvalue belonging to the  eigenvector $r$ is
\[
1 - \frac{\| r \|^2}{\langle r, u \rangle} = 1 - \frac{1 - 2 \alpha \cos \theta + \alpha^2}{1 - \alpha \cos \theta} = \frac{1 - \sin \theta - \alpha^2}{\sin \theta} = \alpha^2,
\]
where one uses $1 - \alpha \cos \theta = \sin \theta$ and $\alpha^2 = (1 - \sin \theta)/(1 + \sin \theta)$. Therefore, the largest eigenvalue of $B$ is $1/\alpha$ (with multiplicity $n-1$), and the smallest eigenvalue is $\alpha$.
\end{proof}

With Lemma~\ref{lem: SR1 update} and Theorem~\ref{thm: variable metric method} at our disposal, we can state our main result.

\begin{theorem}\label{thm: gradient related}
Assume~\eqref{eq:def of cos theta} and 
\begin{equation}\label{eq: scaling condition}
 \| d \| = c \| \nabla f(x) \|
\end{equation}
for some $c > 0$. Define
\[
 \bar h = \frac{2 \cos \theta}{L c (1 + \sin \theta) + \mu c (1 - \sin \theta)}.
\]
Then $x^+$ in~\eqref{eq: perturbed GD} with $0 \le h \le \bar h$ satisfies
\[
 f(x^+) - f(x^*) \le \left(1 - \frac{h \mu c (1-\sin \theta)}{ \cos \theta} \right)^2 (f(x) - f(x^*)).
\]
In particular, the step size $h=\bar h$ yields
\[
 f(x^+) - f(x^*) \le \left(\frac{\kappa_{f,\theta} - 1}{\kappa_{f,\theta} + 1} \right)^2 (f(x) - f(x^*)).
\]
\end{theorem}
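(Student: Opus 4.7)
The strategy is to exhibit the step $x^+ = x - hd$ as a variable metric step $x^+ = x - h A^{-1}\nabla f(x)$ for some symmetric positive definite $A$ with $Ad = \nabla f(x)$, and then to invoke Theorem~\ref{thm: variable metric method}. If $\nabla f(x)=0$ the claim is trivial, so assume $\nabla f(x)\neq 0$, and set the unit vectors $u = d/\|d\|$ and $v = \nabla f(x)/\|\nabla f(x)\|$. By the angle condition~\eqref{eq:def of cos theta} we have $\langle u, v\rangle = \cos\theta$, so Lemma~\ref{lem: SR1 update} supplies a symmetric $B$ with $Bu=v$ and eigenvalues in $[\cos\theta/(1+\sin\theta),\ \cos\theta/(1-\sin\theta)]$.

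Next I would rescale to accommodate the scaling condition~\eqref{eq: scaling condition}. Setting $A = B/c$, the identity $Bu=v$ together with $\|d\|=c\|\nabla f(x)\|$ gives $Ad = \nabla f(x)$, so that $x - hd = x - hA^{-1}\nabla f(x)$ is indeed the variable metric iteration~\eqref{eq: variable metric method}. The eigenvalues of $A$ lie in
\[
[\lambda,\Lambda] = \left[\frac{\cos\theta}{c(1+\sin\theta)},\ \frac{\cos\theta}{c(1-\sin\theta)}\right],
\]
and in particular $A$ is positive definite since $\theta<\pi/2$.

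The rest is bookkeeping: plug these $\lambda,\Lambda$ into Theorem~\ref{thm: variable metric method}. One checks
\[
\frac{L}{\lambda}+\frac{\mu}{\Lambda} = \frac{Lc(1+\sin\theta)+\mu c(1-\sin\theta)}{\cos\theta},
\]
which yields exactly the $\bar h$ claimed in the theorem. The contraction factor $(1-h\mu/\Lambda)^2 = (1-h\mu c(1-\sin\theta)/\cos\theta)^2$ matches the stated bound, and for $h=\bar h$ the modified condition number is
\[
\kappa_{f,A} = \frac{L}{\mu}\cdot\frac{\Lambda}{\lambda} = \frac{L}{\mu}\cdot\frac{1+\sin\theta}{1-\sin\theta} = \kappa_{f,\theta},
\]
giving the desired optimal rate.

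There is no real obstacle beyond getting the scaling right: the only substantive input is Lemma~\ref{lem: SR1 update}, which has already produced a symmetric interpolant of prescribed spectrum between $u$ and $v$, and the rest is a careful normalization of $B$ by $1/c$ to enforce $Ad=\nabla f(x)$ rather than $Bu=v$. The mild verification I would double-check is that the endpoints of $[\lambda,\Lambda]$ and the resulting expressions for $\bar h$ and the contraction factor reproduce~\eqref{eq:optimal rate angle condition} with $\kappa_{f,\theta}$ as written.
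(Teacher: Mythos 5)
Your proposal is correct and follows essentially the same route as the paper's own proof: apply Lemma~\ref{lem: SR1 update} to the unit vectors $u = d/\|d\|$ and $v = \nabla f(x)/\|\nabla f(x)\|$, rescale to $A = B/c$ so that $Ad = \nabla f(x)$, and read off the rate from Theorem~\ref{thm: variable metric method}. Your version just spells out the eigenvalue bookkeeping that the paper leaves implicit.
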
 

\begin{proof}
If $d = 0$, the assertion is trivially true. Let $d \neq 0$. By Lemma~\ref{lem: SR1 update}, there exists a symmetric and positive definite matrix of the form $A = \frac{\| \nabla f(x) \|}{\| d \|}B = \frac{1}{c}B$ such that $Ad = \nabla f(x)$ and
\[
\lambda_{\min}(A) = \frac{1}{c} \left(\frac{\cos \theta}{1 + \sin \theta} \right), \quad \lambda_{\max}(A) = \frac{1}{c} \left(\frac{\cos \theta}{1 - \sin \theta} \right).
\]
The assertion follows therefore directly from Theorem~\ref{thm: variable metric method}.
\end{proof}

\begin{remark}
The condition~\eqref{eq: scaling condition} can be replaced with equivalent conditions such as
\[
\langle \nabla f(x), d \rangle = \sigma \| d \|^2
\]
for some $\sigma > 0$. An equivalent version of Theorem~\ref{thm: gradient related} is obtained by observing that $\cos \theta = \sigma c$.
\end{remark}

To achieve the optimal rate in Theorem~\ref{thm: gradient related}, the exact values of $\theta$ and $c$ need to be known in order to compute the optimal step size $\bar h$. In practice, this is almost never the case and only bounds are available. We therefore formulate another, more practical result of~\eqref{eq:def of cos theta} under the following relaxed angle and scaling conditions: there exists $0 < c_1 \le c_2$ and $\theta' \in [0,\pi/2)$ such that
\begin{equation}\label{eq: conditions}
\theta \le \theta', \quad c_1 \| \nabla f(x) \| \le \| d \| \le c_2 \| \nabla f(x) \|.
\end{equation}
Under these conditions, the eigenvalues of the matrix $A = \frac{\| \nabla f(x) \|}{\| d \|}B$ in the proof of Theorem~\ref{thm: gradient related} can be bounded as
\[
\lambda_{\min}(A) \ge \frac{1}{c_2} \left(\frac{\cos \theta'}{1 + \sin \theta'} \right), \quad \lambda_{\max}(A) \le \frac{1}{c_1} \left(\frac{\cos \theta'}{1 - \sin \theta'}\right),
\]
since $\cos \theta/(1 \pm \sin \theta)$ is monotonically decreasing/increasing in $\theta \in [0, \pi/2)$. The following result is then again immediately obtained from Theorem~\ref{thm: variable metric method}.

\begin{theorem}\label{thm: gradient related practical}
Assume~\eqref{eq: conditions} and define
\[
 \bar h = \frac{2 \cos \theta' }{L c_2  (1 + \sin \theta') + \mu c_1 (1 - \sin \theta')}.
\]
Then $x^+$ in~\eqref{eq: perturbed GD} with $0 \le h \le \bar h$ satisfies
\[
 f(x^+) - f(x^*) \le \left(1 - \frac{h \mu c_1 ( 1 - \sin \theta')}{ \cos \theta'} \right)^2 (f(x) - f(x^*)).
\]
In particular, the step size $h=\bar h$ yields
\[
 f(x^+) - f(x^*) \le \left(\frac{\kappa' - 1}{\kappa' + 1} \right)^2 (f(x) - f(x^*)), \quad \kappa' =  \frac{L}{\mu} \frac{c_2}{c_1} \left( \frac{1 + \sin \theta'}{1 - \sin \theta'} \right).
\]
\end{theorem}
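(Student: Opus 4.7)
The plan is to combine the matrix construction already used in the proof of Theorem~\ref{thm: gradient related} with the worst-case eigenvalue bounds that are spelled out in the paragraph immediately preceding the statement, and then feed the result into Theorem~\ref{thm: variable metric method}. Concretely, assuming $d \neq 0$ (the case $d = 0$ is trivial, as in the proof of Theorem~\ref{thm: gradient related}), I would let $B$ be the matrix provided by Lemma~\ref{lem: SR1 update} applied to the unit vectors $u = d/\|d\|$ and $v = \nabla f(x)/\|\nabla f(x)\|$, with angle $\theta$ given by~\eqref{eq:def of cos theta}, and set
\[
A = \frac{\|\nabla f(x)\|}{\|d\|}\, B,
\]
so that $Ad = \nabla f(x)$ and the iteration~\eqref{eq: perturbed GD} is precisely the variable metric step $x^+ = x - hA^{-1}\nabla f(x)$ of~\eqref{eq: variable metric method}.

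The next step is to bound the spectrum of $A$ using both the angle and scaling relaxations in~\eqref{eq: conditions}. From Lemma~\ref{lem: SR1 update} the eigenvalues of $B$ are $\cos\theta/(1+\sin\theta)$ and $\cos\theta/(1-\sin\theta)$, and the scaling factor $\|\nabla f(x)\|/\|d\|$ lies in $[1/c_2, 1/c_1]$ by~\eqref{eq: conditions}. Because $\theta \mapsto \cos\theta/(1+\sin\theta)$ is decreasing and $\theta \mapsto \cos\theta/(1-\sin\theta)$ is increasing on $[0,\pi/2)$, replacing $\theta$ by the upper bound $\theta'$ gives
\[
\lambda_{\min}(A) \ge \frac{1}{c_2}\,\frac{\cos\theta'}{1+\sin\theta'} =: \lambda, \qquad \lambda_{\max}(A) \le \frac{1}{c_1}\,\frac{\cos\theta'}{1-\sin\theta'} =: \Lambda,
\]
exactly the bounds recorded in the excerpt.

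Finally, I would invoke Theorem~\ref{thm: variable metric method} with this $[\lambda, \Lambda]$. A direct substitution shows that $2/(L/\lambda + \mu/\Lambda)$ reduces to the claimed $\bar h$, and the contraction factor $(1 - h\mu/\Lambda)^2$ simplifies to $\bigl(1 - h\mu c_1(1-\sin\theta')/\cos\theta'\bigr)^2$. For the optimal choice $h = \bar h$ one checks that $\kappa_{f,A} = (L/\mu)(\Lambda/\lambda) = \kappa'$, giving the stated rate. There is no serious obstacle: the heavy lifting is done by Theorem~\ref{thm: variable metric method} and Lemma~\ref{lem: SR1 update}, and the only genuinely new ingredient is the monotonicity argument which has already been flagged in the running text preceding the statement.
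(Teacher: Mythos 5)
Your proposal is correct and follows exactly the paper's route: reuse the matrix $A = \frac{\|\nabla f(x)\|}{\|d\|}B$ from the proof of Theorem~\ref{thm: gradient related}, bound its spectrum via the monotonicity of $\cos\theta/(1\pm\sin\theta)$ under the relaxed conditions~\eqref{eq: conditions}, and substitute into Theorem~\ref{thm: variable metric method}. All the algebraic reductions ($\bar h$, the contraction factor, and $\kappa'=\kappa_{f,A}$) check out.
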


We remark again that if $c_1 = c_2 = \| d \| / \| \nabla f(x) \|$ and $\theta' = \theta$ are known, the resulting statements from Theorem~\ref{thm: gradient related practical} coincide with those in Theorem~\ref{thm: gradient related}.

\begin{remark}
We conclude the section with a side remark. When just looking at the proofs of Theorems~\ref{thm: gradient related} or~\ref{thm: gradient related practical}, it would be natural to ask if there exists a symmetric and positive definite matrix $B$ (and thus $A$) with a smaller condition number than the one from Lemma~\ref{lem: SR1 update}. As for the SR1 update rule, when matrix $B = B_\alpha$ in the lemma is regarded as a function of $\alpha \neq 0$, then it is well known that the stated $\alpha$ is one of the minimizers for the condition number in the class of all positive definite $B_\alpha$ (another is $\cos \theta / (1 - \sin \theta)$); see, e.g.,~\cite{Wolkowicz1994}. Indeed, any $B$ with a smaller condition number would lead to a faster rate in Theorem~\ref{thm: gradient related} (via Theorem~\ref{thm: variable metric method}), which is not possible since the rate is known to be optimal when exact line search is used. This reasoning therefore provides a (rather indirect) proof for the following general statement.
\end{remark}

\begin{theorem}
Let $u, v \in \R^n$ such that $\|u \| = \| v \| = 1$ and $\cos \theta = \langle u, v \rangle > 0$ with $\theta \in [0,\pi/2)$. Then $(1 + \sin \theta)/(1 - \sin \theta)$ is the minimum possible (spectral) condition number among all symmetric and positive definite matrices $B$ satisfying $Bu = v$.
\end{theorem}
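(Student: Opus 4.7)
The plan is to give a direct matrix-analytic argument, complementing the indirect reasoning in the preceding remark. Let $B$ be any symmetric positive definite matrix with $Bu = v$, and let $\lambda = \lambda_{\min}(B)$, $\Lambda = \lambda_{\max}(B)$, so that $\kappa(B) = \Lambda/\lambda$. I would diagonalize $B = \sum_i \lambda_i e_i e_i^*$ in an orthonormal eigenbasis and expand $u = \sum_i u_i e_i$. Then the two identities
\[
\langle Bu, u\rangle = \langle v, u\rangle = \cos\theta \quad \text{and} \quad \|Bu\|^2 = \|v\|^2 = 1
\]
translate into
\[
\sum_i p_i \lambda_i = \cos\theta, \qquad \sum_i p_i \lambda_i^2 = 1,
\]
where $p_i = u_i^2$ defines a probability distribution on the eigenvalues $\lambda_i$. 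Hence the $\lambda_i$, weighted by $p$, have mean $\cos\theta$ and variance $1 - \cos^2\theta = \sin^2\theta$.

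Next I would invoke the elementary bound $(\lambda_i - \lambda)(\Lambda - \lambda_i) \ge 0$, which, after taking the $p$-expectation, yields the Popoviciu-type inequality
\[
\sin^2\theta \;\le\; (\Lambda - \cos\theta)(\cos\theta - \lambda).
\]
Using $\sin^2\theta = 1 - \cos^2\theta$, this rearranges to $\lambda\Lambda - (\lambda + \Lambda)\cos\theta + 1 \le 0$. Substituting $\Lambda = \kappa\lambda$ turns it into a quadratic inequality in $\lambda$, namely $\kappa \lambda^2 - (1+\kappa)\cos\theta\,\lambda + 1 \le 0$, which admits a positive solution only when its discriminant is non-negative, i.e.\ $(1+\kappa)^2 \cos^2\theta \ge 4\kappa$. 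A short manipulation using $\cos^2\theta = 1 - \sin^2\theta$ recasts this as $(\kappa - 1)^2 \ge \sin^2\theta\,(\kappa + 1)^2$, and taking square roots (with $\kappa \ge 1$ and $\sin\theta \in [0,1)$) delivers the desired lower bound $\kappa \ge (1 + \sin\theta)/(1 - \sin\theta)$.

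Attainment is then immediate from Lemma~\ref{lem: SR1 update}, which exhibits a symmetric positive definite $B$ with $Bu = v$ whose extreme eigenvalues are $\cos\theta/(1+\sin\theta)$ and $\cos\theta/(1-\sin\theta)$, giving precisely the ratio $(1+\sin\theta)/(1-\sin\theta)$. The main obstacle I anticipate is the clean algebraic reduction showing that the Popoviciu variance bound saturates exactly at this value of $\kappa$; the spectral reformulation and the variance inequality itself are standard, and the attainment direction is handed to us by the earlier lemma.
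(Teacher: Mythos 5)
Your argument is correct, and it is a genuinely different proof from the one the paper gives. The paper establishes the lower bound only indirectly: the remark preceding the theorem observes that a symmetric positive definite $B$ with $Bu=v$ and condition number below $(1+\sin\theta)/(1-\sin\theta)$ would, via Theorem~\ref{thm: variable metric method}, produce a contraction factor in Theorem~\ref{thm: gradient related} beating the rate $\left(\frac{\kappa_{f,\theta}-1}{\kappa_{f,\theta}+1}\right)^2$, which contradicts the known optimality of that rate for exact line search from~\cite{deKlerk2017}; attainment comes from Lemma~\ref{lem: SR1 update}, exactly as in your last step. The authors explicitly call this ``rather indirect'' and only assert, without details, that a direct proof via a $2\times 2$ computation is possible. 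Your proof supplies a self-contained direct argument in full generality: the spectral reformulation turns $\langle Bu,u\rangle=\cos\theta$ and $\|Bu\|^2=1$ into a mean/variance statement for the eigenvalues under the weights $p_i=u_i^2$, the Bhatia--Davis-type bound $\sin^2\theta\le(\Lambda-\cos\theta)(\cos\theta-\lambda)$ follows from averaging $(\lambda_i-\lambda)(\Lambda-\lambda_i)\ge 0$, and the discriminant condition on the resulting quadratic in $\lambda$ (positive at $\lambda=0$ with positive leading coefficient, so it must have real roots to be nonpositive somewhere) yields $(\kappa-1)^2\ge(\kappa+1)^2\sin^2\theta$ and hence $\kappa\ge(1+\sin\theta)/(1-\sin\theta)$; all steps check out, including the square-root extraction since $\kappa\ge1$ and $\sin\theta\in[0,1)$. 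What your route buys is independence from the external optimality result for exact line search and from the optimization framework altogether, at the cost of a short computation; what the paper's route buys is zero computation, but it leans on~\cite{deKlerk2017} and on the machinery of Theorems~\ref{thm: variable metric method} and~\ref{thm: gradient related}. Your proof is arguably the ``elementary calculation'' the paper alludes to but never writes down, and it works directly in $\R^n$ without any reduction to the two-dimensional span of $u$ and $v$.
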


\noindent
While probably well known in the field, we did not find this fact explicitly stated in the literature. It is, of course, not very difficult to prove this result directly by an elementary calculation on $2 \times 2$ matrices.

\section{Inexact gradient method}

We now discuss the important case of an inexact gradient method, where instead of the angle and scaling conditions~\eqref{eq: conditions}, it is assumed that
\begin{equation}\label{eq: inexact gradient condition}
\| d - \nabla f(x) \| \le \varepsilon \| \nabla f(x) \|
\end{equation}
for some $\varepsilon \in [0,1)$. This model is also considered in~\cite{deKlerk2017,deKlerk2020,Gannot2021}. Our aim is again deriving convergence rates for a fixed step size rule from the variable metric approach. Since the matrix $A$ in the proof of Theorem~\ref{thm: gradient related} no longer provides the optimal rates in this case, we use a different construction.

\begin{lemma}\label{lem: matrix A for inexact gradient}
 Let $u,v \in \R^n$ such that $v \neq 0$ and $\| u - v \| < \| v \|$. There exists a positive definite matrix $A$ that satisfies $Au = v$ and has eigenvalues $\left( 1 \pm  \frac{\| u -v \|}{\| v \|} \right)^{-1}$. 
\end{lemma}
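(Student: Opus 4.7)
The plan is to exhibit $A$ by an explicit rank-one formula and verify both $Au=v$ and the eigenvalue structure by direct calculation. Set $\eta := \|u-v\|/\|v\| \in [0,1)$; the claimed eigenvalues are $1/(1\pm\eta)$. It is convenient to first construct $M := A^{-1}$, so that the requirement becomes: $M$ is symmetric positive definite with eigenvalues $1 \pm \eta$ and $Mv = u$.

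The natural ansatz is
\[
 M \;=\; (1+\eta)\, I \;-\; \frac{2\eta}{\|w\|^2}\, w w^*,
\]
which, for any nonzero $w$, is symmetric and has eigenvalue $1+\eta$ on $w^\perp$ and $(1+\eta)-2\eta = 1-\eta$ on $w$; positive definiteness is automatic from $\eta<1$. Computing $Mv = (1+\eta)v - (2\eta\langle w,v\rangle/\|w\|^2)w$ shows that we need $w$ parallel to $(1+\eta)v - u$ together with the scalar identity $2\eta\langle w,v\rangle = \|w\|^2$. This motivates the choice $w := (1+\eta)v - u$.

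With this $w$, I would verify the scalar identity by writing $w = \eta v + (v-u)$ and expanding:
\[
 \|w\|^2 \;=\; 2\eta^2\|v\|^2 + 2\eta\langle v, v-u\rangle + \|v-u\|^2, \qquad \langle w, v\rangle \;=\; \eta\|v\|^2 + \langle v, v-u\rangle,
\]
after which the hypothesis $\|v-u\|^2 = \eta^2\|v\|^2$ immediately yields $\|w\|^2 = 2\eta\langle w, v\rangle$. Hence $Mv = (1+\eta)v - w = u$, and $A := M^{-1}$ satisfies $Au = v$ with eigenvalues $1/(1+\eta)$ on $w^\perp$ and $1/(1-\eta)$ on $w$, as required.

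The only matter to address separately is the degenerate case $w=0$, i.e.\ $u = (1+\eta)v$: then $v$ is already an eigenvector for $1/(1+\eta)$, and one completes the construction by picking any unit vector $e$ orthogonal to $u$ and setting $A := \tfrac{1}{1+\eta}(I - ee^*) + \tfrac{1}{1-\eta}\, ee^*$, which still satisfies $Au=v$. The only nontrivial step is guessing the correct $w$; once $w = (1+\eta)v - u$ is identified, the required identity is a one-line expansion, and everything else is bookkeeping.
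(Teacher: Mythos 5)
Your construction is correct and is essentially the same as the paper's: your $w=(1+\eta)v-u$ is a positive multiple of the paper's $w=\frac{v}{\|v\|}-\frac{u-v}{\|u-v\|}$, and your $M=(1+\eta)I-\frac{2\eta}{\|w\|^2}ww^*$ is exactly the paper's $A^{-1}=I+\eta Q$ with $Q$ the Householder reflection sending $\frac{v}{\|v\|}$ to $\frac{u-v}{\|u-v\|}$, which makes both $A^{-1}v=u$ and the spectrum $1\pm\eta$ immediate. Two small remarks: the coefficient of $\eta^2\|v\|^2$ in your displayed expansion of $\|w\|^2$ should be $1$ rather than $2$ (the key identity $\|w\|^2=2\eta\langle w,v\rangle$ you deduce is nonetheless correct), and your explicit handling of the degenerate case $w=0$ is a detail the paper's proof silently skips.
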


\begin{proof}
 Define
 \(
A^{-1} = I + \frac{\| u - v \|}{\| v \|} Q
\)
with $Q = I - 2 \frac{w w^*}{\|w\|^2}$ and $w = \frac{v}{\|v\|} - \frac{u-v}{\| u - v \|}$. Observe that $Q$ is the orthogonal reflection matrix that sends $\frac{v}{\| v \|}$ to $\frac{u - v}{\| u - v \|}$, which implies $A^{-1}v = u$. Since $Q$ is symmetric with eigenvalues $\pm 1$, the result follows.
\end{proof}

Applying the lemma to $u = d$ and $v = \nabla f(x)$, the following theorem on the inexact gradient model~\eqref{eq: inexact gradient condition} is an immediate consequence of Theorem~\ref{thm: variable metric method}.

\begin{theorem}\label{thm: inexact gradient}
Assume $\nabla f(x) \neq 0$ and~\eqref{eq: inexact gradient condition} for some $\varepsilon \in [0,1)$ and define
\[
 \bar h = \frac{2 }{L (1 + \varepsilon) + \mu (1 - \varepsilon)}.
\]
Then $x^+ = x - hd$ with $0 \le h \le \bar h$ satisfies
\begin{align*}
 f(x^+) - f(x^*) &\le \left(1 - h \mu \left( 1 - \frac{\| d - \nabla f(x) \|}{ \| \nabla f(x) \|} \right)  \right)^2 (f(x) - f(x^*)) \\
 &\le \left(1 - h \mu ( 1 - \varepsilon ) \right)^2 (f(x) - f(x^*)).
\end{align*}
In particular, the step size $h=\bar h$ yields
\begin{equation}\label{eq: epsilon rate}
 f(x^+) - f(x^*) \le \left(\frac{\kappa_\varepsilon - 1}{\kappa_\varepsilon + 1} \right)^2 (f(x) - f(x^*)), \quad \kappa_\varepsilon =  \frac{L}{\mu} \left( \frac{1 + \varepsilon}{1 - \varepsilon} \right).
\end{equation}
\end{theorem}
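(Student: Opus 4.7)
The plan is to reduce the inexact gradient update to a variable metric step and invoke Theorem~\ref{thm: variable metric method}. Since $\nabla f(x) \neq 0$ and $\varepsilon < 1$, the inexact gradient condition~\eqref{eq: inexact gradient condition} gives $\| d - \nabla f(x) \| < \| \nabla f(x) \|$, so Lemma~\ref{lem: matrix A for inexact gradient} applied with $u = d$ and $v = \nabla f(x)$ produces a symmetric positive definite matrix $A$ with $A d = \nabla f(x)$ whose eigenvalues are exactly $\lambda = 1/(1+\delta)$ and $\Lambda = 1/(1-\delta)$, where $\delta := \| d - \nabla f(x) \|/\|\nabla f(x)\| \in [0,\varepsilon]$. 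Then $x^+ = x - h d = x - h A^{-1} \nabla f(x)$ is exactly a variable metric step, so Theorem~\ref{thm: variable metric method} becomes applicable.

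To apply that theorem I first need $h$ to lie in the admissible range $[0,\, 2/(L/\lambda + \mu/\Lambda)] = [0,\, 2/(L(1+\delta) + \mu(1-\delta))]$. Because $L \ge \mu$, the map $\delta \mapsto L(1+\delta) + \mu(1-\delta)$ is nondecreasing, so this interval contains $[0,\bar h]$ uniformly for $\delta \in [0,\varepsilon]$. Theorem~\ref{thm: variable metric method} then yields
\[
f(x^+) - f(x^*) \le \left(1 - h\mu/\Lambda\right)^2 (f(x) - f(x^*)) = \left(1 - h\mu(1-\delta)\right)^2 (f(x) - f(x^*)),
\]
which is the first displayed inequality. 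The second follows by monotonicity in $\delta$: since $1 - h\mu(1-\delta) \le 1 - h\mu(1-\varepsilon)$, it suffices to check that the right-hand side is nonnegative so that squaring preserves the inequality; this reduces to $\bar h \mu(1-\varepsilon) \le 1$, equivalently $L(1+\varepsilon) \ge \mu(1-\varepsilon)$, which is automatic from $L \ge \mu$.

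For the final assertion, substituting $h = \bar h$ gives $1 - \bar h\mu(1-\varepsilon) = (L(1+\varepsilon) - \mu(1-\varepsilon))/(L(1+\varepsilon) + \mu(1-\varepsilon))$; dividing numerator and denominator by $\mu(1-\varepsilon)$ turns this into $(\kappa_\varepsilon - 1)/(\kappa_\varepsilon + 1)$ with $\kappa_\varepsilon$ as in~\eqref{eq: epsilon rate}. I do not anticipate any real obstacle beyond this bookkeeping: the whole argument is a substitution of $\lambda$ and $\Lambda$ into Theorem~\ref{thm: variable metric method}, followed by the monotonicity/nonnegativity check that the worst case $\delta = \varepsilon$ does not spoil the contraction factor.
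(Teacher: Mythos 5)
Your proposal is correct and follows exactly the paper's route: the paper likewise applies Lemma~\ref{lem: matrix A for inexact gradient} with $u=d$, $v=\nabla f(x)$ and declares the theorem an immediate consequence of Theorem~\ref{thm: variable metric method}, and your bookkeeping with $\delta = \|d-\nabla f(x)\|/\|\nabla f(x)\|$ fills in the same details. The only nitpick is in the squaring step: what you actually need is that the \emph{left} factor $1-h\mu(1-\delta)$ is nonnegative (nonnegativity of the larger right factor alone would not justify comparing squares), but this holds since $h\mu(1-\delta)\le h\mu\le\bar h\mu\le 1$, so the argument goes through.
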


The rate in~\eqref{eq: epsilon rate} is optimal under the general assumption~\eqref{eq: inexact gradient condition}, in particular for quadratic $f$ and $d$ satisfying $\langle \nabla f(x), d \rangle = \cos \theta \| d \| \| \nabla f(x) \|$ with $\sin \theta = \varepsilon$. Trivially, for $f(x) = \frac{1}{2}\| x \|^2$ the estimate~\eqref{eq: epsilon rate} is sharp for all $d$ satisfying~\eqref{eq: inexact gradient condition}.

The result in Theorem~\ref{thm: inexact gradient} is not new. 
In~\cite[Proposition~1.5]{Gannot2021} it has been shown that $\left(\frac{\kappa_\varepsilon - 1}{\kappa_\varepsilon + 1} \right)^2$ is an upper bound for the $R$-linear convergence rate of the inexact gradient method with fixed step size $\bar h$. According to~\cite[Remark~1.6]{Gannot2021}, the estimate~\eqref{eq: epsilon rate} per step is implicitly contained in the proof of~\cite[Theorem~5.3]{deKlerk2020}, which, however, is rather technical. In addition, the statement of~\cite[Theorem~5.3]{deKlerk2020} itself covers the rate~\eqref{eq: epsilon rate} only for a range $\varepsilon \in [0, \bar{\varepsilon}]$ with some $\bar{\varepsilon} < \frac{2\mu}{L+\mu}$. Our proof via Lemma~\ref{lem: matrix A for inexact gradient} provides a simple alternative for obtaining the result for all $\varepsilon \in [0, 1)$ directly from the estimate~\eqref{eq: estimate for function values} for the gradient method (which coincides with~\cite[Theorem~5.3]{deKlerk2020} when $\varepsilon=0$).

\section{Conclusions}

Based on the result~\eqref{eq: estimate for function values} due to~\cite{Taylor2018}, we have derived optimal convergence rates for the function values in  gradient related descent methods and inexact gradient methods with fixed step sizes for smooth and strongly convex functions. The results are obtained using an elementary variable metric approach, in which a single step is interpreted as a standard gradient step. This is possible since function values are a metric independent error measure. Compared to existing results, our proofs offer a more direct way for obtaining the convergence rate estimates of perturbed gradient methods given the rates of their exact counterpart.

{\small

}

\end{document}